\documentclass[12 pt]{article}%
\usepackage{amsmath, amsfonts, amsthm, color,latexsym}
\usepackage{amsmath, ulem}
\usepackage{amsfonts}
\usepackage{amssymb}
\usepackage{color, soul}
\usepackage{xcolor}
\usepackage{tikz-cd}
\usepackage[all]{xy}
\usepackage{graphicx}%
\usepackage{indentfirst}
\providecommand{\U}[1]{\protect\rule{.1in}{.1in}}
\allowdisplaybreaks[4]
\newtheorem{theorem}{Theorem}[section]
\newtheorem{proposition}[theorem]{Proposition}
\newtheorem{corollary}[theorem]{Corollary}
\newtheorem{example}[theorem]{Example}

\newtheorem{remark}[theorem]{Remark}

\newtheorem{lemma}[theorem]{Lemma}
\newtheorem{final remark}[theorem]{Final Remark}

\allowdisplaybreaks[4]

\newcommand{\sol}[1]{\operatorname{sol}(#1)}

\newcommand{\cvf}{\overset{\omega}{\rightarrow}}

\newcommand {\R}{\mathbb{R}}

\newcommand {\N} {\mathbb{N}}
\newcommand{\norma}[1]{\| #1 \|}
\newcommand{\conj}[2]{\left \{ {#1} \, : \, {#2} \right \}}

\begin{document}

\title{The weak maximizing property is not inherited by subspaces of the domain}
\author{Alessandro Costa\,, 
Vinícius  Miranda and Geivison Ribeiro}
\date{}
\maketitle

\begin{abstract}
We construct a separable reflexive Banach lattice $X$ and a closed sublattice $E\subseteq X$ such that every bounded linear operator from $X$ into $\ell_2$ is compact, whereas
there exists a positive operator from $E$ into $\ell_2$ that does not attain its norm and admits a positive maximizing sequence converging weakly to a nonzero vector.
This answers negatively the question posed by Dantas, Jung and Mart\'inez-Cervantes in \cite{dantasjung} concerning
the inheritance of the weak maximizing property by closed subspaces of the domain. It also answers negatively the corresponding question for the positive weak maximizing property and closed sublattices of reflexive Banach lattices.
Canonical lattice complexification yields a counterexample to the original question over the complex field as well.
\end{abstract}
\medskip
\noindent\textbf{Keywords:} weak maximizing property, positive operator,
Banach lattice, norm attainment, closed subspace.

\smallskip\noindent\textbf{2020 Mathematics Subject Classification:}
Primary 46B20, 46B42; Secondary 47B07, 47B65.

\section{Introduction}

A pair of Banach spaces $(E, F)$ is said to have the {\it weak maximizing property} (WMP, in short) if every bounded linear operator $T: E \to F$ admitting  a non-weakly null maximizing sequence attains its norm. This property was introduced by Aron, García, Pellegrino and Teixeira in \cite{aron}, and then studied by several different authors, see, e.g, \cite{dantasjung, garcia-lirola, han, jung, mirandapams}. Here a maximizing sequence for $T$ is a sequence $(x_n)\subset S_E$
such that $\|Tx_n\|\to\|T\|$.

In \cite{dantasjung}, Dantas, Jung and Martínez-Cervantes proved that the WMP passes to closed subspaces of the range and to quotients of the domain. In the same manuscript, they posed the following question:

\medskip

\noindent{\bf Question 1} (\cite[Question 4.1]{dantasjung}) Let $E, F$ be Banach spaces and $E_1$ be a closed subspace of $E$. If $(E, F)$ has the WMP, does $(E_1, F)$ also have the WMP?

\medskip

The natural lattice analogue of the WMP was  introduced by Luiz and the second named author (see \cite[Definition 3.1]{luizmiranda}): A pair of Banach lattices $(E, F)$ has the {\it positive weak maximizing property} (WMP$^+$, in short) if every positive operator $T: E \to F$ admitting a positive maximizing sequence which is not weakly null attains its norm. The classical WMP implies WMP$^+$. As noticed in \cite[Examples 3.2]{luizmiranda}, the converse is not true and the positive property does not force the domain to be reflexive. Moreover, it was proved in \cite[Proposition 3.3(b)]{luizmiranda} that if the pair of Banach lattices $(E, F)$ has the WMP$^+$, then $(E_1, F)$ has the WMP$^+$ for every sublattice $E_1 \subset E$ that is the range of a positive contractive projection. Clearly, this result does not pass to a general sublattice, not even for closed ideals:

\begin{example}
    \rm The pair $(c, \ell_2)$ has the WMP$^+$, but $(c_0, \ell_2)$ does not. Indeed, it follows from \cite[Examples 3.2(3)]{luizmiranda} that $(c, \ell_2)$ has the WMP$^+$ since $c$ is an AM-space with an order unit. On the other hand, the positive operator $T: c_0 \to \ell_2$, defined by 
    $T(a_j)_j = \left (\dfrac{\sqrt{3}}{2^n} a_n\right )_n, $
    does not attain its norm and the positive sequence $u_n := \displaystyle \sum_{j=1}^n e_j, n \in \N$, defines a non-weakly null maximizing sequence for $T$.
\end{example}

The example above makes a reflexivity assumption a natural next step: 

\medskip

\noindent{\bf Question 2:} Let $E, F$ be Banach lattices, with $E$ reflexive, and let $E_1$ be a closed sublattice of $E$. If $(E, F)$ has the WMP$^+$, does $(E_1, F)$ also have the WMP$^+$?

\medskip

Our main result solves Question 2 and the real case of Question 1: 

\medskip

\noindent {\bf Main Theorem} {\it There exist a separable reflexive Banach lattice $X$ and a closed sublattice $X_1 \subset X$ such that $(X,\ell_2)$
has the WMP, while $(X_1,\ell_2)$ fails to have the WMP$^+$.}

\medskip

The complex solution for Question 1 will then be obtained in the following corollary:

\begin{corollary} \label{casocomplexo}
Let $X_{\mathbb C}$ and $(X_1)_{\mathbb C}$
be the canonical lattice complexifications of the real Banach spaces $X$ and $X_1$, respectively. Then $(X_{\mathbb C}, \ell_2(\mathbb C))$ has the WMP, but $((X_1)_{\mathbb C}, \ell_2({\mathbb C}))$ does not have the WMP.
\end{corollary}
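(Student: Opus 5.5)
We deduce the corollary from the Main Theorem. Recall that the canonical complexification of a real Banach lattice $Y$ is $Y_{\mathbb C}=Y\oplus iY$ with the norm $\|x+iy\|=\big\|\,|x+iy|\,\big\|$, where $|x+iy|=\sup_{\theta}(x\cos\theta+y\sin\theta)$. Two facts about it will be used repeatedly: the norm of $x+iy$ is equivalent to $\max(\|x\|,\|y\|)$, with constants $1$ and $2$, and $Y\hookrightarrow Y_{\mathbb C}$ is an isometric real-linear embedding. Also, $\ell_2(\mathbb C)$ is the complexification of the real space $\ell_2$. The proof has two parts.

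\textbf{Part 1: $(X_{\mathbb C},\ell_2(\mathbb C))$ has the WMP.} From the Main Theorem (and the abstract), every bounded operator $X\to\ell_2$ is compact, and $X$ is reflexive. I will use the standard fact from \cite{aron} that if $E$ is reflexive and every operator $E\to F$ is compact, then $(E,F)$ has the WMP. The argument is short: take a maximizing sequence with a weakly convergent subsequence $x_n\rightharpoonup x\neq 0$. Compactness gives $Tx_n\to Tx$ in norm, so $\|Tx\|=\|T\|$ while $\|x\|\le 1$. Hence $x/\|x\|$ attains the norm; if $\|x\|<1$ this would force $\|T\|=0$, and that case is trivial.

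So it suffices to prove two things. First, $X_{\mathbb C}$ is reflexive; this holds because as a real space it is isomorphic to $X\oplus X$. Second, every complex-linear $S:X_{\mathbb C}\to\ell_2(\mathbb C)$ is compact. For this, write $S=(S_1+iS_2)$ on real parts: the restriction $S|_X$ is a real operator $X\to\ell_2(\mathbb C)\cong\ell_2\oplus\ell_2$, and each component is compact by hypothesis. Then $S(x+iy)=Sx+iSy$ is compact.

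\textbf{Part 2: $((X_1)_{\mathbb C},\ell_2(\mathbb C))$ fails the WMP.} The Main Theorem gives a positive $T:X_1\to\ell_2$ that does not attain its norm, together with a positive maximizing sequence $x_n\rightharpoonup x\neq0$. Let $T_{\mathbb C}(x+iy)=Tx+iTy$. The key fact is that for positive operators $\|T_{\mathbb C}\|=\|T\|$. This follows from $|T_{\mathbb C}z|\le T|z|$, which holds because $T$ is positive, together with the fact that both norms are lattice norms: $\|T_{\mathbb C}z\|\le\|T|z|\|\le\|T\|\,\|z\|$. Since $X_1\subset (X_1)_{\mathbb C}$ isometrically, $(x_n)$ is still a maximizing sequence for $T_{\mathbb C}$, and it still converges weakly to $x\neq0$, because every complex functional restricts to a real functional on $X_1$.

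It remains to show that $T_{\mathbb C}$ does not attain its norm, and I expect this to be the main obstacle. Suppose $\|z\|=1$ and $\|T_{\mathbb C}z\|=\|T\|$. Then
\[
\|T\|=\|T_{\mathbb C}z\|\le\|T|z|\|\le\|T\|\,\||z|\|=\|T\|.
\]
So $|z|\in X_1$ is a unit vector with $\|T|z|\|=\|T\|$, which means $T$ attains its norm at $|z|$, a contradiction. The only care needed is that $|z|\in X_1$, which holds because $X_1$ is a real Banach lattice, and that the lattice norm on $\ell_2(\mathbb C)$ is the usual one, so that $\|w\|=\||w|\|$ and the norm is monotone.
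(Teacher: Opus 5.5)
Your proposal is correct and follows the paper's proof essentially step by step. For the WMP of $(X_{\mathbb C},\ell_2(\mathbb C))$, you get compactness of every complex-linear operator by splitting it into real and imaginary parts and then invoke reflexivity. For the failure on $(X_1)_{\mathbb C}$, you use the inequality $|T_{\mathbb C}z|\le T|z|$, which gives both $\|T_{\mathbb C}\|=\|T\|$ and non-attainment (attainment at $z$ would force $T$ to attain its norm at $|z|$), and you restrict complex functionals to $X_1$ to keep the maximizing sequence non-weakly null. The only difference is cosmetic: you prove explicitly that a compact operator on a reflexive space with a non-weakly null maximizing sequence attains its norm, which the paper takes for granted.
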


The remainder of the paper is devoted to the construction of the Banach lattices $X$ and $X_1$. We first produce auxiliary results. We refer the reader to \cite{aliprantis, meyer} for background on Banach lattices and to \cite{fabian} for Banach space theory.

\section{The construction and the proofs}

Throughout this section, the spaces and operators are real, except in the proof of Corollary \ref{casocomplexo}. We write $B_E$ and $S_E$ for the closed unit ball and the unit sphere of a Banach space $E$. For a subset $A$ in a Banach space $E$, $\operatorname{co}(A)$ denotes the convex hull of $A$. If $A$ is a subset of a Banach lattice, then $\sol{A}$ denotes the solid hull  of $A$, and $A^+$ denotes the set of all elements in $A$ that are positive.

This section is devoted to presenting the construction of a separable reflexive Banach lattice $X$ and a closed sublattice $X_1 \subset X$ such that $(X, \ell_2)$ has the WMP, while $(X_1, \ell_2)$ does not have the WMP$^+$.  To construct $X$, we first apply the famous Davis-Figiel-Johnson-Pelczynski \cite{dfjp} construction to a Banach lattice $Z$ obtained from the construction in \cite{argyros} and a solid, convex, weakly compact set $W \subset Z$. This yields a reflexive Banach lattice $Y$ such that every bounded operator from $\ell_2$ into $Y$ is compact. The space $X$ is then obtained by equipping $Y^*$ with a suitable equivalent lattice norm and taking its $\ell_\infty$-sum with $\R$. We will use, more than once, that a real Banach space with an $1$-unconditional basis admits a Banach lattice structure (see, e.g., \cite[p. 2]{lind}).

\subsection{The construction of $Y$}

We begin by applying \cite[Theorem 4.1]{argyros} for a convenient Banach lattice $X_T$ and a weakly compact subset $K$ of $X_T$.   For each $n \in \N$, let  $D_n = \conj{j8^{-n}}{1 \leq j \leq 8^n}$ and $T_n = D_1 \times \cdots \times D_n$. Then $T := \displaystyle  \bigcup_{n=1}^\infty T_n$ is a tree on  $\displaystyle \bigcup_{n=1}^\infty D_n$, ordered by extension.
Fix an enumeration $h:T\to\N$ as in
\cite[Definition 2.2]{argyros}: nodes are enumerated level by
level and, within each level, in increasing order of their last
coordinate, with ties resolved arbitrarily. All intervals and
block sequences below refer to this fixed enumeration.

Now, we equip $c_{00}(T)$ with the norm
$\displaystyle  \norma{x}_T = \left ( \sum_{n=1}^\infty \left ( \sum_{t \in T_n} |x(t)| \right )^2 \right )^{1/2}, $
and let $X_T$ be its completion. Since each $T_n$ is finite, the isometric identification between $X_T$ and $\ell_2(\oplus_{n=1}^\infty \ell_1(T_n))$ shows that $X_T$ is reflexive.  Moreover, with respect to the enumeration in $T$, the canonical vectors
$(e_t)_{t\in T}$ form a normalized $1$-unconditional Schauder basis
of $X_T$. Therefore, $X_T$ admits a Banach lattice structure provided by this basis.
We claim that $\norma{\cdot}_T$ is SC-unconditional in the sense of \cite[Definition 3.1]{argyros}. Indeed, for every $A \subset T$, we have
$$ \left \| \sum_{t \in A} \lambda_t e_t  \right \|_T = \left ( \sum_{n=1}^\infty \left ( \sum_{t \in T_n \cap A} |\lambda_t|   \right )^2 \right )^{1/2} \leq \left ( \sum_{n=1}^\infty \left ( \sum_{t \in T_n } |\lambda_t|   \right )^2 \right )^{1/2} = \left \| \sum_{t \in T} \lambda_t e_t  \right \|_T,  $$
proving that $\norma{\cdot}_T$ is an SC-unconditional norm for $X_T$. 

Define $\psi:T\to[0,1]$ by $\psi(a_1,\ldots,a_n)=a_n$. For each $t \in T$, set  $k_t=\sum_{s\preceq t}\psi(s)e_s,$
where $s\preceq t$ means that $s$ is an initial segment of $t$.
Following \cite[Definition 3.1]{argyros}, with $C=2$, let $K = \overline{K_2}^{\norma{\cdot}_T}$, where
$$K_2=
 \left\{
 P_I k_t:
 t\in T,\ \norma{k_t}_T\leq2,\
 I\text{ is an interval of the fixed enumeration}
 \right\},$$
where $P_I$ denotes the coordinate projection onto $I$. In particular, $0\in K$. For each $m\in\N$, let $P_m=P_{\bigcup_{n=1}^{m}T_n}$
denote the projection onto the first $m$ levels.

\begin{lemma}\label{lemaKiscomp}
The set $K$ is a weakly compact subset of $X_T^+$.
\end{lemma}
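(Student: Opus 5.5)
The plan is to establish three things in turn: $K\subset X_T^+$, $K$ is bounded, and $K$ is weakly closed. Weak compactness then follows from the reflexivity of $X_T$.

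\emph{Positivity and boundedness.} The lattice order on $X_T$ is the coordinatewise order given by the $1$-unconditional basis $(e_t)_{t\in T}$. Every $\psi(s)=a_n$ lies in $(0,1]$, and $P_I$ only deletes coordinates. Hence each $P_Ik_t$ has nonnegative coordinates, so $K_2\subset X_T^+$. The positive cone is norm closed, so $K\subset X_T^+$.

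By $1$-unconditionality, or directly by the SC-unconditionality computed above, $\norma{P_Ik_t}_T\le\norma{k_t}_T\le 2$. Thus $K\subset 2B_{X_T}$.

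\emph{Reducing to sequences.} Since $X_T$ is reflexive, $K$ is relatively weakly compact. It therefore suffices to show that $K$ is weakly closed. By the Eberlein--\v{S}mulian theorem, the weak closure of a relatively weakly compact set consists of weak limits of sequences from the set. Moreover, every element of $K$ is a norm limit of elements of $K_2$, and $2B_{X_T}$ is weakly metrizable on the separable space $X_T$. So it is enough to take $x_j=P_{I_j}k_{t_j}\in K_2$ with $x_j\to x$ weakly and prove $x\in K$. Weak convergence gives coordinatewise convergence $x_j(s)\to x(s)$ for every $s\in T$.

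\emph{Stabilisation on finite levels (main step).} Each $x_j$ has a rigid structure. Its support is contained in the chain $\{s\preceq t_j\}$, so it has at most one node on each level. On its support, $x_j(s)=\psi(s)$.

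For fixed $m$, the set $T_1\cup\dots\cup T_m$ is finite. Hence $P_mx_j$ ranges over a finite set of vectors. By a diagonal argument, pass to a subsequence such that, for every $m$, $P_mx_j$ is eventually constant. Coordinatewise convergence then forces $P_mx=P_mx_j$ for all large $j$.

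We now identify $P_mx$ as an element of $K_2$. Let $t'$ be the restriction of $t_j$ to the first $\min(m,|t_j|)$ levels. Then $P_mk_{t_j}=k_{t'}$, and
\[
\norma{k_{t'}}_T\le\norma{k_{t_j}}_T\le 2 .
\]
Because the enumeration runs level by level, $\bigcup_{n\le m}T_n$ is an initial segment of it. Therefore $I_j':=I_j\cap\bigcup_{n\le m}T_n$ is again an interval. Since $k_{t'}$ is supported in the first $m$ levels,
\[
P_mx=P_mP_{I_j}k_{t_j}=P_{I_j'}k_{t'}\in K_2 .
\]

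\emph{Conclusion.} Since $x\in X_T$ and $(e_t)$ is a basis enumerated level by level, $P_mx\to x$ in norm. Hence $x\in\overline{K_2}=K$, so $K$ is weakly closed and therefore weakly compact.

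The delicate point is the stabilisation step. It relies on two facts: the finiteness of each level, which makes the diagonal argument possible, and the compatibility of the level-by-level enumeration with intervals, which is needed for $I_j'$ to remain an interval.
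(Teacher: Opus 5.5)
Your proof is correct and follows essentially the same route as the paper: the finiteness of $P_mK_2$, the inclusion $P_mK_2\subseteq K_2$ obtained by intersecting intervals with the initial segment $\bigcup_{n\le m}T_n$, and the norm convergence $P_mx\to x$. The paper skips your Eberlein--\v{S}mulian reduction and diagonal argument by using only that $P_m$ is weakly continuous with finite range on $K$, so that $P_mx\in P_mK\subseteq K$ for every $x$ in the weak closure. One small correction: weak metrizability of $2B_{X_T}$ requires separability of $X_T^*$, which holds here because $X_T$ is reflexive, not merely because $X_T$ is separable.
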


\begin{proof}
By construction, $K\subseteq X_T^+\cap 2B_{X_T}$.
Since $X_T$ is reflexive, it suffices to show that $K$
is weakly closed. For each $m \in \N$, the set $P_m K_2$ is finite since the first $m$ levels contain only finitely many nodes and each coodinate $t$ takes values in $\{0, \psi(t)\}$. Moreover, $P_m K_2 \subset K_2$, because the intersection of an interval with the first $m$ levels is again an interval or is empty. The continuity of $P_m$  and the definition of $K$ gives that $P_m K = P_m K_2 \subset K$.
Finally, let $x \in \overline{K}^w$. Since $P_m$ is weakly continuous and $P_m K$ is finite, we have $P_m x \in P_m K \subset K$ for every $m \in \N$. Thus, $P_m x \to x$ in norm, and since $K$ is norm-closed, we obtain that $x \in K$.
\end{proof}

Applying \cite[Theorem 4.1]{argyros}, we obtain the Banach space $Z=\mathfrak{X}_{\xi}$ associated with a countable ordinal
$\xi\geq1$ chosen as in that construction. In particular, $\xi$ is chosen so that $X_T$ admits no $\ell_1^\xi$-spreading model, as in \cite[Subsection 4.2]{argyros}. The space $Z$ is the
completion of $c_{00}(T)$ under a norm $\norma{\cdot}_Z$ satisfying
$\norma{x}_T\leq\norma{x}_Z$ for every $x\in c_{00}(T)$.
The identity on $c_{00}(T)$ extends to a bounded linear injection $I:Z\to X_T$, with $K\subseteq I(Z)$, and the set
\[
 B_0=
\overline{\operatorname{co}}^{\norma{\cdot}_Z}
 \bigl(I^{-1}(K)\cup I^{-1}(-K)\bigr)
\]
is weakly compact in $Z$.
Since $K$ is weakly closed in $X_T$ and $I:Z\to X_T$
is weak-to-weak continuous, $I^{-1}(K)$ is weakly closed
in $Z$. Being contained in the weakly compact set $B_0$, it is therefore weakly compact.
From now on, we denote $I^{-1}(K)\subseteq Z$ simply by $K$.

By \cite[Remark 4.16]{argyros}, $(e_t)_{t\in T}$ is a bimonotone Schauder basis of $Z$. In this particular construction, 
it is also $1$-unconditional. Indeed, let $G_1$ and $G_\xi$
be the norming sets introduced in
\cite[Subsection 4.2 and Definition 4.15]{argyros}.
Since the norm of $X_T$ is $1$-unconditional, $G_1$ is invariant
under coordinatewise changes of signs. Such changes preserve
supports and hence the admissibility conditions in the operations
defining $G_\xi$. Induction on its construction shows that $G_\xi$
has the same invariance. Consequently,
$$\left\|\sum_{t\in T}\varepsilon_t a_t e_t\right\|_Z
 =
 \left\|\sum_{t\in T}a_t e_t\right\|_Z$$
for all finitely supported scalar families $(a_t)_{t\in T}$
and all choices of signs $\varepsilon_t\in\{-1,1\}$.
Thus $(e_t)_{t\in T}$ is a $1$-unconditional Schauder basis,
and $Z$ is a Banach lattice under the coordinatewise order.
In particular, all coordinate projections are contractive,
and all coordinate sign changes are isometries.

\begin{lemma}\label{wiswc}
The set $W=\sol{\overline{\operatorname{co}}^{\norma{\cdot}_Z}(K)}$
is solid, convex, and weakly compact.
\end{lemma}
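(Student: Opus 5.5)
The plan is to set $C=\overline{\operatorname{co}}^{\norma{\cdot}_Z}(K)$ and exploit three facts: $K\subseteq Z^+$, the basis $(e_t)_{t\in T}$ of $Z$ is $1$-unconditional, and $K$ is weakly compact in $Z$.

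\emph{Step 1: $C$ is weakly compact and lies in $Z^+$.} Since $K$ is weakly compact in $Z$, the Krein--\v{S}mulian theorem shows that $C$ is weakly compact. Because $K\subseteq I^{-1}(X_T^+)$ and the order on both spaces is coordinatewise, $K\subseteq Z^+$. As $Z^+$ is a closed convex cone, it follows that $C\subseteq Z^+$.

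\emph{Step 2: a concrete description of the solid hull.} For positive elements the solid hull is
\[
\sol{C}=\conj{z\in Z}{|z|\le c \text{ for some } c\in C}.
\]
Solidity is immediate from this description. For convexity, take $|z_i|\le c_i$ with $c_i\in C$ and $\lambda\in[0,1]$. Then
\[
|\lambda z_1+(1-\lambda)z_2|\le \lambda c_1+(1-\lambda)c_2\in C .
\]
Hence $W$ is solid and convex.

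\emph{Step 3: weak compactness of $W$, the main step.} Since $Z$ has a $1$-unconditional basis, any $z$ with $|z|\le c$ can be written coordinatewise as $z=M_a c$, where $a=(a_t)_{t\in T}\in[-1,1]^T$ and $M_a$ is the diagonal multiplication operator. By $1$-unconditionality each $M_a$ has norm at most $1$. So
\[
W=\bigl\{M_a c : a\in[-1,1]^T,\ c\in C\bigr\}
\]
is the image of the map $\Phi(a,c)=M_a c$ defined on $[-1,1]^T\times C$.

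Equip $[-1,1]^T$ with the product topology, which is compact metrizable since $T$ is countable, and equip $C$ with the weak topology. The product is then compact, and I would show that $\Phi$ is continuous into $(Z,w)$. Then $W$ is a continuous image of a compact set, hence weakly compact.

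\emph{Step 4: continuity of $\Phi$.} Since $C$ is a weakly compact subset of a separable space, its weak topology is metrizable, so sequences suffice. Let $a_n\to a$ coordinatewise and $c_n\to c$ weakly.
\begin{itemize}
\item The sequence $M_{a_n}c_n$ is bounded, because each $M_{a_n}$ is a contraction and $C$ is bounded.
\item For each $t\in T$, the $t$-th coordinate is $a_n(t)c_n(t)\to a(t)c(t)$, using weak convergence of $c_n$ tested against the coordinate functionals.
\end{itemize}
Boundedness gives relative weak compactness, since $Z$ is reflexive or at least $C$ sits in a reflexive setting. For safety, I would instead argue as follows. $W\subseteq \sol{C}$ and $|M_{a_n}c_n|\le c_n$, so the tails are controlled. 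Given $z^*\in Z^*$, consider $\langle z^*, M_{a_n}c_n-M_ac\rangle$ and split $z^*$ into a finitely supported part and a remainder.

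The obstacle is that $Z$ need not be reflexive, so the tail of $z^*$ does not shrink uniformly on its own. I would handle this with the solid, positive structure. The tail contribution is bounded by $\langle |z^*|, (I-P_F)c_n\rangle$, where $P_F$ is the coordinate projection onto a finite set $F$. This is small uniformly in $n$ because $\{c_n\}\cup\{c\}$ is relatively weakly compact in $Z^+$, and the $1$-unconditional (order-continuous) structure lets us pass to a weakly convergent subsequence. If this tail estimate proves delicate, the fallback is to cite the known result that in a Banach lattice with order continuous norm (a $1$-unconditional basis without a $c_0$-issue) the solid hull of a relatively weakly compact set is relatively weakly compact (Aliprantis--Burkinshaw). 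That result would then be combined with the closedness of $W$ in norm, obtained from the description in Step 2 together with Step 1.
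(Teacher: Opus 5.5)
Your fallback is the paper's proof. The $1$-unconditional basis makes the norm of $Z$ order continuous, so the Aliprantis--Burkinshaw theorem (Theorem 4.39 in their book: in an order continuous Banach lattice, solid hulls of relatively weakly compact sets are relatively weakly compact) gives relative weak compactness of $W$. Closedness then comes from the weak compactness of $C$ and the weak closedness of $Z^+$.

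The paper does the closedness step directly in the weak topology with nets: $|x_\alpha|\le z_\alpha\in C$, a subnet $z_\alpha\cvf z\in C$, then $z\pm x\in Z^+$. Your norm-closedness version also works, but say explicitly that it uses Eberlein--\v{S}mulian on $C$ and then Mazur's theorem, via convexity of $W$, to upgrade norm-closed to weakly closed. The parenthetical about a ``$c_0$-issue'' is misplaced: a $1$-unconditional basis always yields an order continuous norm. What $c_0$ fails is the KB property, which is not needed here.

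Your primary route is genuinely different and, once completed, more self-contained. Realizing $W$ as the image of the compact space $[-1,1]^T\times(C,w)$ under $(a,c)\mapsto M_ac$ gives weak compactness in one stroke, with no separate closedness argument and no appeal to the solid-hull theorem. However, its crux, the uniform tail bound, is not justified as written. Passing to a weakly convergent subsequence adds nothing, since you already have $c_n\cvf c$, and ``relatively weakly compact'' alone is not a reason.

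The missing ingredient is Dini's theorem. Fix $z^*\in Z^*$ and finite sets $F_k\uparrow T$, and put $g_k(x)=\langle |z^*|,(I-P_{F_k})x\rangle$. Then:
\begin{itemize}
\item each $g_k$ is weakly continuous;
\item $(g_k)$ decreases pointwise on $Z^+$, because $(I-P_{F_k})x$ decreases coordinatewise when $x\ge 0$;
\item $g_k(x)\to0$, because $P_{F_k}x\to x$ in norm.
\end{itemize}
Dini on the weakly compact set $C\subseteq Z^+$ therefore gives $\sup_{c\in C}g_k(c)\to 0$. Moreover, $|\langle z^*,(I-P_{F_k})M_ac\rangle|\le \langle |z^*|,(I-P_{F_k})|M_ac|\rangle\le g_k(c)$ for all $a\in[-1,1]^T$ and $c\in C$. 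The $P_{F_k}$-part converges by coordinatewise convergence, so continuity of $\Phi$ follows, even along nets. This makes the metrizability discussion unnecessary.

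Finally, drop the sentence invoking reflexivity of $Z$: nothing in the construction provides it.
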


\begin{proof}
It is immediate that $W$ is convex and solid. To prove that $W$ is weakly compact, set $C=\overline{\operatorname{co}}^{\norma{\cdot}_Z}(K)$. Since $K$ is weakly compact in $Z$, $C$ is a weakly compact subset of $Z$ contained in $Z^+$. However, as the order of $Z$ is given by an $1$-unconditional Schauder basis, the norm of $Z$ is order continuous (see, e.g., \cite[Proposition 3.4]{lugarmir}). Thus, by \cite[Theorem 4.39]{aliprantis}, $W$ is relatively weakly compact. 
It remains to show that $W$ is weakly closed. Let  $(x_\alpha)$ be a net in $W$ converging weakly to $x$,
and choose $z_\alpha\in C$ with $|x_\alpha|\leq z_\alpha$. Since $C$ is weakly compact, passing to a subnet, we may assume that $z_\alpha \cvf z \in C$.
Since $z_\alpha\pm x_\alpha\in Z^+$ and $Z^+$ is weakly
closed, we obtain $z\pm x\in Z^+$.
Hence $|x|\leq z$, so $x\in W$.
Therefore $W$ is weakly compact.
\end{proof}

Fix $1<p<2$. We apply the DFJP interpolation construction
to $Z$ and $W$, using its $\ell_p$ version (see \cite[Section 5 and Remark 5.8]{argyros};
see also \cite[p.~281]{maligrandaquevedo}).
For each $n\in\N$, let $\norma{\cdot}_n$ be the Minkowski
functional of
$ U_n=2^nW+2^{-n}B_Z,$
and define
$$ Y=\left\{
 z\in Z:
 \norma{z}_Y:= \left(\sum_{n=1}^{\infty}\norma{z}_n^p\right)^{1/p}
 <\infty
 \right\}.$$
The DFJP construction ensures that $Y$ is a Banach space,
and the weak compactness of $W$ implies that $Y$ is reflexive.
Moreover, as $W$ is a solid subset, each $\norma{\cdot}_n$ is a lattice norm, and consequently $Y$ is
an order ideal of $Z$ (see, e.g., \cite[Theorem 5.37]{aliprantis}), hence a Banach lattice. 
We next establish some properties of $Y$ that will be used in the proof of the main theorem.

\begin{proposition} \label{propZ}
If $Y$ is the Banach lattice presented above, then: \\
{\rm (1)} $Y$ is separable and reflexive. \\
{\rm (2)} There exists a positive bounded linear surjection $q: Y \to \ell_2$ such that $q^*: \ell_2 \to Y^*$ is a lattice embedding.
\end{proposition}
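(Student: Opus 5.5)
Part (1) is handled directly, and part (2) is obtained by dualizing a positive isomorphic embedding of $\ell_2$ into $Y$.

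For (1), reflexivity is already given by the DFJP construction, since $W$ is weakly compact by Lemma \ref{wiswc}. For separability, I will use that $Y$ embeds continuously into $Z$, which has a Schauder basis. Concretely, $Y$ is isometric to the diagonal of $\ell_p(\oplus (Z,\norma{\cdot}_n))$, a subspace of an $\ell_p$-sum of separable spaces. Each $(Z,\norma{\cdot}_n)$ is isomorphic to $Z$, and $Z$ is separable because $c_{00}(T)$ is dense with $T$ countable. So the sum is separable, and hence so is $Y$.

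For (2), I plan to find a positive operator $j:\ell_2\to Y$ that is an isomorphic embedding and maps the unit vectors onto a disjoint positive sequence. I then take $q=j^*$, after identifying $Y^{**}=Y$ and $\ell_2^*=\ell_2$, so that $q$ goes from $Y$ (as $Y^{**}$'s predual side) to $\ell_2$. Precisely, let $j:\ell_2\to Y$ be an isomorphic embedding with $j(e_k)=y_k$, where the $y_k\geq0$ are pairwise disjoint. Put $q:=j^*:Y^*\to\ell_2$. Since the statement wants $q:Y\to\ell_2$, I instead embed into $Y^*$: take disjoint positive $y_k^*\in Y^*$ spanning $\ell_2$ lattice-isomorphically, let $J:\ell_2\to Y^*$ be $J(e_k)=y_k^*$, and set $q=J^*|_Y:Y\to\ell_2$ using reflexivity. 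Then $q^*=J$. Disjointness and positivity of the $y_k^*$ give that $J(\sum a_ke_k)=\sum a_ky_k^*$ preserves $|\cdot|$, so $J$ is a lattice embedding. The map $q$ is positive because it is the adjoint of a positive operator, and it is surjective because $J$ is an isomorphic embedding, hence bounded below.

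The key task is therefore to produce a disjoint positive sequence in $Y^*$ equivalent to the $\ell_2$ basis. Dually, this amounts to a disjoint normalized positive sequence $(u_k)$ in $Y$ such that $\norma{\sum a_ku_k}_Y\approx\norma{(a_k)}_2$, or at least satisfies an upper $\ell_2$ estimate that allows $\ell_2$-lower functionals. My candidates are vectors $u_k$ built from the branch vectors $k_t$ restricted to disjoint intervals on different levels. The reason is that the $\ell_2$-sum over levels in $\norma{\cdot}_T$ forces an upper $2$-estimate on $W$-pieces, and in the DFJP norm $\norma{\cdot}_n$ elements of $W$ have small norm. The concrete aim is to show $\norma{\sum a_ku_k}_Y\leq C\norma{(a_k)}_2$, using that $\sum a_ku_k$ lies in a multiple of $\overline{\operatorname{co}}(K)$ controlled via the level structure. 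I then pair this with functionals coming from the coordinate functionals on $X_T$ to get the lower $\ell_2$ estimate on the dual side. This is the main obstacle, since it needs quantitative information about how $K$ sits in $Z$ beyond mere weak compactness.

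If the direct estimate fails, I will fall back on \cite[Section 5]{argyros}, where the $\ell_p$-DFJP space is shown to contain, or to have a quotient onto, structure inherited from $X_T=\ell_2(\oplus\ell_1(T_n))$. Composing with the natural level projections, $q$ can then be taken as the map $y\mapsto(\langle y,f_n\rangle)_n$ for disjoint positive functionals $f_n$, with $q^*(e_n)=f_n$ disjoint, and hence $q^*$ a lattice embedding.
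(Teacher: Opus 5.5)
\textbf{Verdict: there is a genuine gap in part (2).} Part (1) is exactly the paper's argument. Your reduction in (2) is also sound. If $f_n\in Y^*$ are pairwise disjoint positive functionals and $b\mapsto\sum_n b_nf_n$ is an isomorphic embedding of $\ell_2$ into $Y^*$, then $q(y)=(f_n(y))_n$ is positive and onto, and $q^*$ is a lattice embedding. Disjointness is a legitimate substitute for the paper's check that $q$ is interval preserving.

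The gap is the lower $\ell_2$-estimate for $(f_n)$. Your concrete aim is a disjoint normalized positive sequence $(u_k)$ in $Y$ with $\norma{\sum_k a_ku_k}_Y\leq C\norma{(a_k)}_2$. No such sequence exists in this $Y$. The estimate defines a bounded operator $R:\ell_2\to Y$ with $Re_k=u_k$. This $R$ is not compact, because $(e_k)$ is weakly null while $\norma{u_k}_Y=1$, and that contradicts Proposition~\ref{prop24}. Dropping normalization does not help: $f_k(u_k)\geq c>0$ together with $\sup_k\norma{f_k}<\infty$ (needed for the upper estimate on $(f_k)$) forces $\inf_k\norma{u_k}_Y>0$.

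Your step ``dually, this amounts to'' conflates an $\ell_2$-subspace of $Y^*$ (equivalently, an $\ell_2$-quotient of $Y$) with $\ell_2$-structure inside $Y$. The construction is designed precisely so that the former exists and the latter does not. Your fallback to \cite[Section 5]{argyros} does not close the gap either: it names neither the functionals nor an argument for the lower estimate.

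The missing idea is a nonlinear choice of preimages.
\begin{itemize}
\item Take $f_n(y)=\sum_{t\in T_n}y(t)$, i.e.\ $q=\Phi\circ I\circ J$ with $\Phi(x)=(\sum_{t\in T_n}x(t))_n$. The upper estimate is automatic because $q$ is bounded, using $\norma{\Phi x}_2\leq\norma{x}_T$.
\item Given $a\in B_{\ell_2}$, choose $b_n\in D_n$ with $|a_n|\leq b_n\leq|a_n|+8^{-n}$, and take the branch $t_n=(b_1,\dots,b_n)$. This branch depends on $a$.
\item Then $k_{t_m}=\sum_{n\leq m}b_ne_{t_n}$ satisfies $\norma{k_{t_m}}_T=\norma{(b_n)_{n\leq m}}_2<2$, so $k_{t_m}\in K$. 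Solidity of $W$ gives $\sum_{n\leq m}a_ne_{t_n}\in W$.
\item Since $\norma{w}_n\leq 2^{-n}$ for $w\in W$, the set $W$ is bounded in $Y$. By reflexivity these vectors have a weak cluster point $z\in Y$ with $q(z)=a$ and $\norma{z}_Y\leq\sup_{w\in W}\norma{w}_Y$.
\end{itemize}
Because different $a$ use different branches, this gives the $\ell_2$-quotient of $Y$ without producing any bounded non-compact operator from $\ell_2$ into $Y$.
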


\begin{proof}
(1) As noted above, $Y$ is reflexive since $W$ is weakly compact. To prove separability, recall that $Z$ has a Schauder basis indexed by the countable set $T$, hence $Z$ is separable. For each $n \in \N$, the norm $\norma{\cdot}_n$ is equivalent to $\norma{\cdot}_Z$, so $(Z, \norma{\cdot}_n)$ is also separable. Consequently, the Banach space 
$\displaystyle \ell_p(\oplus_{n=1}^\infty (Z, \norma{\cdot}_n))$ is separable. Since the diagonal mapping $y \mapsto (y,y,\dots)$ is a linear isometric embedding of $Y$ into this space, we obtain that $Y$ is separable.

(2) Let $J: Y \to Z$ denote the natural inclusion, let $I: Z \to X_T$ be the mapping obtained in the construction of $Z$, and define $\Phi: X_T \to \ell_2$ by $\Phi(x) = (\sum_{t \in T_n} x(t))_{n=1}^\infty$. By the definition of $\norma{\cdot}_T$, we have
$$ \norma{\Phi(x)}_2^2 = \sum_{n=1}^\infty \left | \sum_{t \in T_n} x(t) \right |^2 \leq \sum_{n=1}^\infty \left ( \sum_{t \in T_n} |x(t)| \right )^2 = \norma{x}_T^2 $$
for every $x \in X_T$, which proves that $\Phi$ is a well-defined bounded linear operator. Clearly, $\Phi$ is positive. Since $I$ and $J$ are also positive, the mapping $q := \Phi \circ I \circ J: Y \to \ell_2$ is positive as well. We first check that $q$ is a surjection. Indeed, notice that $W$ is a bounded subset of $Y$ since, for every $w \in W$, $\norma{w}_n \leq 2^{-n}$, and so $\norma{w}_Y \leq \left ( \sum_{n=1}^\infty 2^{-np} \right )^{1/p} < \infty$. Now, fix $a = (a_j)_j \in B_{\ell_2}$. For each $n \in \N$, choose $b_n \in D_n$ such that $|a_n| \leq b_n \leq |a_n| + 8^{-n}$ and put $t_n = (b_1, \dots, b_n)$. For instance, take $b_n=8^{-n}\max\{1,\lceil8^n|a_n|\rceil\}$.
Then 
$$ \norma{(b_n)_n}_2 \leq \norma{(a_n)_n}_2 + \norma{(8^{-n})_n}_2 < 2.  $$
Consequently, for every $m \in \N$, $w_m := \displaystyle \sum_{n=1}^m b_n e_{t_n} \in K$ by the definition of $K$. Since $W$ is solid, $z_m := \displaystyle \sum_{n=1}^m a_n e_{t_n} \in W$ for every $m \in \N$. Thus $(z_m)_m$ is a bounded sequence in the reflexive space $Y$, and hence it has a weakly convergent subsequence. 
Let $z_{m_j} \cvf z \in Y$. As $q$ is continuous $qz_{m_j} \cvf q(z)$ in $\ell_2$. However, since
$ \displaystyle q(z_{m_j}) = \sum_{n=1}^{m_j} a_n e_n \longrightarrow a $ in $\ell_2$, we conclude that $q(z) = a$, proving that $q$ is surjective. This implies that $q^*: \ell_2 \to Y^*$ is an isomorphic embedding. Thus, it remains to check that $q^*$ is a lattice homomorphism, and by \cite[Theorem 2.19]{aliprantis}, it is enough to check that $q([0,y])=[0,q(y)]$ holds for every $y\in Y^+$. One inclusion follows directly from positivity. For the reverse inclusion, fix $b = (b_n)_n \in \ell_2$ with $0 \leq b \leq q(y)$. Set
$$\alpha_n = \begin{cases}
        \dfrac{b_n}{e_n^*(q(y))}, & e_n^*(q(y)) > 0 ,\\[4pt]
        0, & e_n^*(q(y)) =0.
    \end{cases}, \qquad n \in \N.$$
Define $v(t) = \alpha_n y(t)$ for every $t \in T_n$. Since $0 \leq \alpha_n \leq 1$, the unconditionality of the basis of $Z$ gives $v \in Z$ with $0 \leq v \leq y$. As $Y$ is an ideal of $Z$, we have $v \in Y$. Moreover, $q(v) = b$, proving the claim. Therefore, $q^*$ is a lattice homomorphism, and hence $q^*$ is a lattice embedding in the sense of \cite[p. 222]{aliprantis}. 
\end{proof}

We next prove that every bounded linear operator from $\ell_2$
into $Y$ is compact.

\begin{proposition} \label{prop24}
    Every bounded linear operator from $\ell_2$ into $Y$ is compact.
\end{proposition}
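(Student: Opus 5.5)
We plan to derive this from the main property of the Argyros construction: in \cite[Theorem 4.1]{argyros} (together with its DFJP part, \cite[Section 5 and Remark 5.8]{argyros}), the interpolation space built from a weakly compact set inside $\mathfrak{X}_\xi$ has the property that every bounded operator from a space without $\ell_1^\xi$-spreading models, or at least from $\ell_2$, into the interpolation space is compact. The reason is that $Z$ is constructed so that every non-compact operator into $Z$ forces $\ell_1^\xi$-spreading models, while $X_T$ has none. Concretely, we will show the following. If $S:\ell_2\to Y$ is bounded and not compact, then there is a normalized weakly null sequence $(x_k)$ in $\ell_2$, which we may take to be a block sequence of the unit vector basis, such that $\|Sx_k\|_Y\geq\delta>0$. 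Since $Y$ is reflexive and $(e_t)$ remains a basis of $Y$, a perturbation argument (gliding hump) then lets us assume that $(Sx_k)$ is a seminormalized block sequence in $Y$.

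The key steps are as follows. First, pass to the DFJP level. Since $\|Sx_k\|_Y\geq\delta$ while $Sx_k$ is bounded in $Y$, the $\ell_p$-sum structure gives, for all large $n$, a uniform estimate $Sx_k\in C_n\,(2^nW+2^{-n}B_Z)$. Together with $\|Sx_k\|_Y\geq\delta$, this shows that the $Sx_k$ are ``essentially'' in a multiple of $W$ and are seminormalized in $Z$. This is the standard argument that a non-compact operator into a DFJP space yields a sequence close to $\lambda W$ that is not norm null in $Z$.

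Second, use the upper $\ell_2$ estimate. For any scalars, $\|\sum_k a_kSx_k\|_Y\leq\|S\|\,(\sum_k|a_k|^2)^{1/2}$. In particular, averages $\frac1N\sum_{k\in F}Sx_k$ over admissible sets $F$ tend to zero in norm. Third, use the structure of $Z$ on $W$: by \cite[Theorem 4.1]{argyros}, any seminormalized block sequence lying (up to small perturbation) in the convex solid hull of $K$ has a subsequence generating an $\ell_1^\xi$-spreading model in $Z$. This is exactly what the norming set $G_\xi$ is designed for, given that $\psi$ and $K$ come from the tree $T$. An $\ell_1^\xi$-spreading model forces $\|\frac1N\sum_{k\in F}Sx_k\|_Z\geq c>0$ on Schreier-admissible $F$. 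Since $\|\cdot\|_Z\le C\|\cdot\|_Y$ on $Y$, this contradicts the averaging estimate from the second step. Hence $S$ is compact.

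The main obstacle is the third step: transferring the lower $\ell_1^\xi$ estimate from vectors of $K$ (or branches $k_t$) to arbitrary blocks of $W=\sol{\overline{\operatorname{co}}(K)}$. We will try first to invoke \cite[Theorem 4.1]{argyros} directly in its stated form, namely that every operator from a reflexive space without $\ell_1^\xi$-spreading models (such as $\ell_2$, which has only $\ell_2$ spreading models) into the DFJP space of $(\mathfrak X_\xi, W)$ is compact. We will then check that the hypotheses match: $W$ is solid, convex and weakly compact by Lemma \ref{wiswc}, and the $\ell_p$ version of DFJP is permitted by Remark 5.8 there. If the cited theorem is stated only for $\overline{\operatorname{co}}(K\cup -K)$, we will note that $W\subseteq \overline{\operatorname{co}}(\pm K)$ up to sign changes. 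Since coordinate sign changes are isometries of $Z$ and coordinate projections are contractions, the solid hull is controlled by $B_0$ enlarged by a bounded factor, which yields equivalent DFJP norms.
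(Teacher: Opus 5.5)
There is a genuine gap in your first step. From $\|Sx_k\|_Y\geq\delta$ and $Sx_k\in M(2^rW+2^{-r}B_Z)$ it does not follow that $(Sx_k)$ is seminormalized in $Z$. Writing $Sx_k=\lambda w_k+e_k$ with $\|e_k\|_Z<M2^{-r}$ gives no lower bound on $\|Sx_k\|_Z$. The $Y$-norm can be carried by the norms $\|\cdot\|_n$ with $n$ large, where only $\|y\|_n\leq 2^n\|y\|_Z$ is available.

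The ``standard argument'' you invoke is not valid. Take $Z=\ell_2$, the compact solid ellipsoid $W=\{x:\sum_j j^2x_j^2\leq 1\}$ and $p=2$. Then the DFJP norm is equivalent to $(\sum_j j\,y_j^2)^{1/2}$, so $e_j\mapsto e_j/\|e_j\|_Y$ is a non-compact operator from $\ell_2$ into $Y$ whose images tend to $0$ in $Z$.

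This is why the paper splits the proof into two cases.
\begin{itemize}
\item If $\|JTu_n\|_Z\not\to 0$, an $\ell_1^\xi$ argument in the spirit of your third step gives $\|\sum_{n\in A}z_n\|_Z\geq\frac a8|A|$ for $A\in\mathcal S_\xi$. This contradicts the upper $\ell_2$ estimate.
\item If $\|JTu_n\|_Z\to 0$, then $\|JTu_n\|_k\leq 2^k\|JTu_n\|_Z\to 0$ for each fixed $k$. So the diagonal images $(JTu_n,JTu_n,\ldots)$ in $\ell_p(\oplus_k(Z,\|\cdot\|_k))$ have their mass escaping to infinity in $k$. A sliding hump gives almost disjointly supported pieces and the estimate $\|\sum_{n\leq m}Tu_n\|_Y\geq\frac{\delta}{2}m^{1/p}-\frac{\delta}{2}$. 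This contradicts $\|T\|\sqrt m$ precisely because $p<2$.
\end{itemize}
Your proposal never uses $p<2$, and that is the symptom of the missing case. The spreading-model structure of $Z$ says nothing about $Z$-null sequences, and for $p=2$ that case yields no contradiction.

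There are two smaller issues. First, your third step is asserted, not proved. The paper does not obtain the $\ell_1^\xi$ lower estimate, let alone a compactness statement for operators into the DFJP space, from \cite[Theorem 4.1]{argyros}. It works by hand:
\begin{itemize}
\item it passes to level blocks;
\item it dominates $|w_n|$ by some $h_n\in\overline{\operatorname{co}}(K)$ supported on the same block;
\item it uses \cite[Proposition 4.30]{argyros} to find pairwise incomparable segment-complete sets $F_n$ carrying almost all of $h_n$, hence of $w_n$, because the norm is a lattice norm;
\item it restricts norming functionals via \cite[Lemma 4.17]{argyros} so that $\frac12\sum_{n\in A}f_n\in G_\xi$.
\end{itemize}

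Second, your fallback that $W$ lies in a bounded multiple of $\overline{\operatorname{co}}(K\cup(-K))$ is false. Sign changes are isometries of $Z$, but the DFJP norms depend on the set, not on the norm. Let $t|j$ denote the initial segment of length $j$, and choose a node $t$ of length $N$ such that $b_j=\psi(t|j)$ is non-increasing, $\|k_t\|_T\leq 2$ and $\sum_j b_j\geq\sqrt N/2$. Then $x=\sum_{j\leq N}(-1)^jb_je_{t|j}$ lies in $W$ because $|x|=k_t$. On the other hand, $f=\sum_{j\leq N}(-1)^je^*_{t|j}$ satisfies $|f|\leq b_1\leq 1$ on $K\cup(-K)$, since intervals meet the branch in consecutive nodes, while $f(x)=\sum_j b_j$. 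The correct way to pass from $W$ to $\operatorname{co}(K)$ is domination $|w|\leq h$ together with the lattice norm, as in the paper.
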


\begin{proof}
    For the sake of contradiction, we assume that there exists a non-compact bounded linear operator $T: \ell_2 \to Y$. Since $\ell_2$ is a Hilbert space, there exists an orthonormal sequence $(u_n)_n$ in $\ell_2$ and some $\delta > 0$ such that $\norma{Tu_n}_Y\geq\delta$ for every $n \in \N$ (see \cite[Theorem 2.3]{gorokhova}). Let $J:Y\to Z$ denote the natural inclusion.

     We claim that $\norma{JTu_n}_Z \to 0$ as $n \to \infty$. If $(JTu_n)_n$ does not converge to $0$, then, by passing to a subsequence if necessary, we can assume that $\norma{JTu_n}_Z \geq a$ for every $n \in \N$ and some $a > 0$. Set $\varepsilon = a/64$ and $M > \norma{T}$.  By the definition of the norm of $Y$,
    $\norma{JTu_n}_r \leq \norma{Tu_n}_Y \leq \norma{T} < M$ for all $n,r \in \N$, and so
    $ JTu_n \in M2^r W + M2^{-r}B_Z $ for all $n,r \in \N$. Choose $r \in \N$ such that $M2^{-r} < \varepsilon$ and put $\lambda = M2^r$. Thus, for each $n \in \N$, there exists $\widetilde{w}_n \in W$ with 
    $$ \norma{JTu_n - \lambda \widetilde{w}_n}_Z < \varepsilon. $$
Since $(JTu_n)_n$ is weakly null and each level of the tree is finite, a sliding hump argument yields, after passing to a subsequence, integers $0=m_0<m_1<m_2<\cdots$ such that,
setting $B_n := \displaystyle \bigcup_{k=m_{n-1} + 1}^{m_n} T_k$, we have 
$ \norma{JTu_n - P_{B_n}(JTu_n)}_Z < \varepsilon, $  where $P_{B_n}$ denotes the coordinate projection onto $B_n$. Setting $w_n := P_{B_n} \widetilde{w}_n$ for every $n \in \N$, we have $w_n \in W$ since $W$ is solid. Moreover, 
\begin{align*}
     \norma{JTu_n - \lambda w_n}_Z & \leq \norma{JTu_n - P_{B_n}(JTu_n)}_Z + \norma{P_{B_n}(JTu_n) - P_{B_n}(\lambda \widetilde{w}_n)}_Z < 2 \varepsilon.
\end{align*}
Now, by the definition of $W$, we can choose $(h_n)_n \subset \overline{\operatorname{co}}(K)$ such that $|w_n| \leq h_n$ for every $n \in \N$. Since 
$ P_{B_n}\bigl(\overline{\operatorname{co}}(K)\bigr)
 \subseteq \overline{\operatorname{co}}(K)$ and $|w_n| = P_{B_n}|w_n| \leq P_{B_n} h_n$, replacing $h_n$ by $P_{B_n} h_n$, we may assume that $h_n$ is supported on $B_n$, that is $h_n(t) = 0$ for every $t \in T\setminus B_n$. Approximating $h_n$ by elements of
$\operatorname{co}(K)$ and applying the contractive projection $P_{B_n}$, we can choose
$h_n'\in\operatorname{co}(K)$, supported on $B_n$, such that $\norma{h_n-h_n'}_Z<\dfrac{\varepsilon}{2\lambda}.$ Notice that $h_n'\in\operatorname{co}(K)
\subseteq\operatorname{co}(K\cup(-K))=W_\xi^0$,
where $W_\xi^0$ is the set defined in \cite{argyros}.
Applying \cite[Proposition 4.30]{argyros} to $(h_n')_n$ and $(B_n)_n$, with $\varepsilon/(2\lambda)$ in place of
$\varepsilon$, we obtain an infinite set $L\subset\N$ and pairwise incomparable, segment-complete sets $F_n\subseteq B_n$, $n\in L$, such that
$$\norma{h_n'-P_{F_n}h_n'}_Z
 <\frac{\varepsilon}{2\lambda}
 $$
for every $n \in L$. 
Passing to the subsequence indexed by $L$ and relabeling all
the corresponding sequences, we may assume that these properties
hold for every $n\in\N$.
Since $|w_n| \leq h_n$ and $\norma{\cdot}_Z$ is a lattice norm, we have
\begin{align*}
    \norma{w_n - P_{F_n}w_n}_Z \leq \norma{h_n - P_{F_n}h_n}_Z \leq \norma{h_n - h_n'}_Z + \norma{h_n'- P_{F_n} h_n'}_Z < \frac{\varepsilon}{\lambda}.
\end{align*}
Consequently, $z_n := \lambda P_{F_n}w_n$ satisfies $\norma{JTu_n - z_n}_Z < 3 \varepsilon$ and $\norma{z_n}_Z \geq a - 3 \varepsilon > a/2$ for every $n \in \N$. Thus, $(z_n)_n$ is a seminormalized level-block sequence with pairwise incomparable supports. 
Following the proof of \cite[Remark 4.21]{argyros}, we claim that
\[
 \left\|\sum_{n\in A}z_n\right\|_Z
 \geq \frac{a}{8}|A|
 \qquad
 \text{for every nonempty }A\in\mathcal S_\xi,
\]
where $\mathcal S_\xi$ is the Schreier family associated with
the ordinal $\xi$ used in the construction of $Z$.
Indeed, since $\norma{z_n}_Z>a/2$, we can choose
$f_n\in G_\xi$ such that $f_n(z_n)>a/4$.
As $F_n$ is segment-complete and $z_n$ is supported on $F_n$,
\cite[Lemma 4.17]{argyros} allows us to replace $f_n$ by
its restriction to $F_n$. Thus, we may assume that
$\operatorname{supp}f_n\subseteq F_n$.
These supports are successive and pairwise incomparable.
Moreover, $\min h(\operatorname{supp}f_n)\geq n$, so the
spreading property of $\mathcal S_\xi$ implies that
$(f_n)_{n\in A}$, listed in increasing order, is
$(T,\xi)$-admissible whenever $A\in\mathcal S_\xi$ is nonempty.
Consequently,
$\frac12\sum_{n\in A}f_n\in G_\xi$, and hence
\[
 \left\|\sum_{n\in A}z_n\right\|_Z
 \geq
 \left(\frac12\sum_{n\in A}f_n\right)
 \left(\sum_{n\in A}z_n\right)
 =
 \frac12\sum_{n\in A}f_n(z_n)
 >
 \frac{a}{8}|A|.
\]
Thus, for every nonempty $A\in\mathcal S_\xi$, since $(u_n)_n$ is an orthonormal sequence,
\begin{align*}
    \norma{JT} |A|^{1/2} &  \geq \norma{JT (\sum_{n \in A} u_n)}_Z \geq \left\|\sum_{n\in A}z_n\right\|_Z
       -\sum_{n\in A}\norma{JTu_n-z_n}_Z\\ 
       & \geq \left(\frac a8-3\varepsilon\right)|A|
 =\frac{5a}{64}|A|.
\end{align*}
Since $\mathcal S_\xi$ contains sets of arbitrarily large
cardinality, this is a contradiction. Therefore, $\norma{JTu_n}_Z\to0$.

Hence,
$\norma{JTu_n}_k\leq2^k\norma{JTu_n}_Z\to0$
as $n\to\infty$ for every $k\in\N$.
Consider the isometric embedding
\[
 \Delta:Y\longrightarrow
 \ell_p(\oplus_{k=1}^{\infty}(Z,\norma{\cdot}_k)),
 \qquad
 \Delta y=(Jy,Jy,\ldots).
\]
Thus, a sliding hump argument yields, after passing to a
subsequence, vectors $v_n$ supported on successive finite
intervals of coordinates such that
$\displaystyle\sum_{n=1}^{\infty}\norma{\Delta Tu_n-v_n}
<\dfrac{\delta}{2}$.
Since
$\norma{\Delta Tu_n}=\norma{Tu_n}_Y\geq\delta$,
we have $\norma{v_n}\geq\delta/2$, and for every $m\in\N$,
\begin{align*}
 \norma{T}\sqrt m
&\geq\left\|\sum_{n=1}^{m}Tu_n\right\|_{Y}
 =\left\|\sum_{n=1}^{m}\Delta Tu_n\right\|\\
 &\geq
 \left\|\sum_{n=1}^{m}v_n\right\|
 -\sum_{n=1}^{m}\norma{\Delta Tu_n-v_n}\\
 &\geq
\left(\sum_{n=1}^{m}\norma{v_n}^p\right)^{1/p}
 -\frac{\delta}{2}
 \geq\frac{\delta}{2}m^{1/p}-\frac{\delta}{2},
\end{align*}
which is a contradiction since $p<2$. Therefore, $T$ is compact. 
\end{proof}

\subsection{The proof of the Main Theorem}

Before proceeding to the proof of the Main Theorem, we need the following lemma:

\begin{lemma}  \label{lemanorma}
    Let $F$ be a closed sublattice of a Banach lattice $(E, \norma{\cdot}_{E})$. Suppose that $\rho$ is a lattice norm on $F$ such that there are constants $0 < c \leq C$    
    satisfying 
    $$ c \norma{y}_{E} \leq \rho(y) \leq C \norma{y}_{E} $$
    for every $y \in F$. Then 
    $ |\!|\!|x|\!|\!| := \inf \conj{\rho(y) + C \norma{(|x| - y)^+}_E}{y \in F^+} $
    defines an equivalent lattice norm on $E$ with $|\!|\!|y|\!|\!| = \rho(y)$ for every $y \in F$ and $$c \norma{x}_{E} \leq |\!|\!|x|\!|\!| \leq C \norma{x}_E$$ for every $x \in E$.
\end{lemma}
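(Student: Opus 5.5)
The plan is to verify the norm axioms for $|\!|\!|\cdot|\!|\!|$ directly from its definition, using two elementary lattice facts: the identity $u=(u\wedge v)+(u-v)^+$, and the subadditivity $(a+b-c-d)^+\leq (a-c)^++(b-d)^+$, valid for all $a,b,c,d$ in a Banach lattice. Throughout, write
\[
N(x,y):=\rho(y)+C\norma{(|x|-y)^+}_E,\qquad y\in F^+,
\]
so that $|\!|\!|x|\!|\!|=\inf_{y\in F^+}N(x,y)$.

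\emph{Lattice-norm properties.} First, $|\!|\!|x|\!|\!|$ depends only on $|x|$. If $|x_1|\leq|x_2|$, then $(|x_1|-y)^+\leq(|x_2|-y)^+$ for every $y$, and since $\norma{\cdot}_E$ is a lattice norm this gives $|\!|\!|x_1|\!|\!|\leq|\!|\!|x_2|\!|\!|$. Positive homogeneity follows by replacing $y$ with $|\lambda|y$, which again lies in $F^+$. For the triangle inequality, fix $y_1,y_2\in F^+$ and use $|x_1+x_2|\leq|x_1|+|x_2|$. This gives
\[
(|x_1+x_2|-y_1-y_2)^+\leq(|x_1|-y_1)^++(|x_2|-y_2)^+,
\]
and $\rho(y_1+y_2)\leq\rho(y_1)+\rho(y_2)$. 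Hence $|\!|\!|x_1+x_2|\!|\!|\leq N(x_1,y_1)+N(x_2,y_2)$, and taking infima over $y_1,y_2$ yields $|\!|\!|x_1+x_2|\!|\!|\leq|\!|\!|x_1|\!|\!|+|\!|\!|x_2|\!|\!|$.

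\emph{Equivalence with $\norma{\cdot}_E$.} Choosing $y=0$ gives $|\!|\!|x|\!|\!|\leq C\norma{x}_E$. For the lower bound, note that $|x|\leq y+(|x|-y)^+$ for every $y\in F^+$. Using the hypothesis $c\norma{y}_E\leq\rho(y)$ and $c\leq C$, we get
\[
c\norma{x}_E\leq c\norma{y}_E+c\norma{(|x|-y)^+}_E\leq\rho(y)+C\norma{(|x|-y)^+}_E=N(x,y).
\]
Taking the infimum over $y$ gives $c\norma{x}_E\leq|\!|\!|x|\!|\!|$. In particular $|\!|\!|\cdot|\!|\!|$ is definite, so it is an equivalent lattice norm on $E$.

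\emph{Agreement with $\rho$ on $F$.} This is the step I expect to be the real point of the lemma, since it is where the sublattice hypothesis on $F$ enters. Let $y\in F$. Since $F$ is a sublattice, $|y|\in F^+$, and taking $|y|$ as the competitor gives $|\!|\!|y|\!|\!|\leq\rho(|y|)=\rho(y)$, because $\rho$ is a lattice norm. For the reverse inequality, fix $z\in F^+$ and decompose
\[
|y|=(|y|\wedge z)+(|y|-z)^+ .
\]
Both summands belong to $F$, again because $F$ is a sublattice. Since $0\leq|y|\wedge z\leq z$ and $\rho$ is a lattice norm, $\rho(|y|\wedge z)\leq\rho(z)$. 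By the upper comparison hypothesis, $\rho((|y|-z)^+)\leq C\norma{(|y|-z)^+}_E$. Combining these,
\[
\rho(y)=\rho(|y|)\leq\rho(z)+C\norma{(|y|-z)^+}_E=N(y,z).
\]
Taking the infimum over $z\in F^+$ gives $\rho(y)\leq|\!|\!|y|\!|\!|$, which completes the argument.
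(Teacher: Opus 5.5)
Your proof is correct, and its equivalence estimate is exactly the paper's argument: $|x|\leq y+(|x|-y)^+$ gives the lower bound, and the choice $y=0$ gives the upper bound. The paper omits the lattice-norm axioms and the identity $|\!|\!|y|\!|\!|=\rho(y)$ on $F$ as straightforward, and your verifications of these correctly fill that gap, in particular the decomposition $|y|=(|y|\wedge z)+(|y|-z)^+$ inside the sublattice $F$.
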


\begin{proof}
    We omit the straightforward proof that $|\!|\!|\cdot|\!|\!| $ defines a lattice norm on $E$ and that $|\!|\!|y|\!|\!| = \rho(y)$ for every $y \in F$. We check that $|\!|\!|\cdot|\!|\!| $ is equivalent to $\norma{\cdot}_E$. Indeed, for every $x \in E$ and $y \in F^+$, we have $|x| \leq y + (|x| - y)^+$, and so
    $$ c \norma{x}_E \leq c (\norma{y}_E + \norma{(|x| - y)^+}_E) \leq \rho(y) + C \norma{(|x| - y)^+}_E. $$
    Taking the infimum yields $c \norma{x}_E \leq |\!|\!|x|\!|\!|$. Finally, taking $y = 0$ in the definition of $|\!|\!|\cdot|\!|\!|$, we get $|\!|\!|x|\!|\!| \leq C \norma{x}_E$, completing the proof.
\end{proof}

\medskip

\noindent{\it Proof of the Main Theorem:} Let $Y$ be the separable reflexive Banach lattice constructed in Subsection 2.1 and let $q: Y \to \ell_2$ be the mapping given by Proposition \ref{propZ}. Let $X_0 = Y^*$ and $F = q^*(\ell_2)$. Since $q^*$ is a lattice embedding, $F$ is a closed sublattice of $X_0$. We shall construct a lattice norm $\rho$ on $F$ satisfying the conditions in Lemma \ref{lemanorma}. To do this, define $D: \ell_2 \to \ell_2$ by $De_n = n^{-1}e_n$ for every $n \in \N$, and put
$$ \rho(y) := \norma{(q^*)^{-1}(y)}_2 + \norma{D(q^*)^{-1}(y)}_2, \qquad y \in F. $$
Since $(q^*)^{-1}: F \to \ell_2$ and $D$ are lattice homomorphisms, $\rho$ is a lattice norm on $F$. Moreover, for every $y \in F$, 
\begin{align*}
    \norma{y}_{X_0} & = \norma{q^*(q^*)^{-1}y}_{X_0} \leq \norma{q^*} \norma{(q^*)^{-1}y}_2 \leq \norma{q^*} \rho(y) \\
    & = \norma{q^*} \left ( \norma{(q^*)^{-1}(y)}_2 + \norma{D(q^*)^{-1}(y)}_2 \right ) \\
    & \overset{\norma{D} \leq 1}{\leq} \norma{q^*} \left ( 2 \norma{(q^*)^{-1}y}_2 \right ) \leq \norma{q^*} 2 \norma{(q^*)^{-1}} \norma{y}_{X_0}.
\end{align*}
Hence
$$ \frac{1}{\norma{q^*}} \norma{y}_{X_0} \leq \rho(y) \leq 2 \norma{(q^*)^{-1}} \norma{y}_{X_0} $$
for every $y \in F$. Since $\dfrac{1}{\norma{q^*}} \leq 2 \norma{(q^*)^{-1}}$, we can apply Lemma \ref{lemanorma} to obtain an equivalent lattice norm $|\!|\!|\cdot|\!|\!|$ on $X_0$ whose restriction to $F$
coincides with $\rho$. Now, we set
$$X := (X_0, |\!|\!|\cdot|\!|\!|) \oplus_\infty \R \quad \text{and} \quad X_1 = (F, \rho) \oplus_\infty \R. $$
From the above construction, $X$ is a Banach lattice and $E$ is a closed sublattice of $X$.

Since $Y$ is separable and reflexive, its dual $X_0=Y^*$ is
separable. Equivalent renormings preserve separability, and
therefore $X=(X_0,|\!|\!|\cdot|\!|\!|)\oplus_\infty\R$ is separable as well.
 Let $S: X \to \ell_2$ be a bounded linear operator and define $S_0: X_0 \to \ell_2$  by $S_0(x) = S(x,0)$. By equivalence of norms, $S_0$ is bounded with respect to the original dual norm on $X_0$.
Identifying $Y^{**}$ with $Y$, Proposition~\ref{prop24}
implies that $S_0^*:\ell_2\to Y$ is compact.
Hence, by Schauder's theorem, $S_0$ is compact,
also when $X_0$ is equipped with $|\!|\!|\cdot|\!|\!|$.
Since
$S(x,t) = S_0(x) + t S(0,1)$
holds for every $(x,t) \in X$, $S$ is the sum of a compact operator and a finite-rank operator, and consequently it is compact. 
Therefore, as $X$ is reflexive, $S$ attains its norm. This proves that every bounded linear operator from $X$ into $\ell_2$
attains its norm, and so $(X,\ell_2)$ has the WMP.

To prove that $(X_1, \ell_2)$ fails the WMP$^+$, consider the positive bounded linear operator $T:X_1\to\ell_2$ given by
$$T(y,t)=(q^*)^{-1}(y), \qquad (y,t) \in X_1.$$
For every $(y,t)\in X_1$, we have
$ \norma{T(y,t)}_2 = \norma{(q^*)^{-1}(y)}_2 \leq  \rho(y) \leq \norma{(y,t)}_{X_1}$,
so $\norma{T}\leq1$. On the other hand, setting $x_n := (\frac{q^*(e_n)}{1+n^{-1}}, 1)$ for every $n \in \N$, we get that $(x_n)_n$ is a normalized non-weakly null sequence contained in $S_E^+$ such that
$ \norma{Tx_n}_2 = \frac{1}{1+n^{-1}} \to 1$, proving that $\norma{T} = 1$. Thus, $(x_n)_n$ is a positive non-weakly null maximizing sequence for $T$. Finally, for the sake of contradiction, we suppose that  $T$ attains its norm at some
$(y,t)\in S_{X_1}$. Then
$$ 1 = \norma{T(y,t)}_2 = \norma{(q^*)^{-1}(y)}_2 \leq \rho(y) \leq 1. $$
By the definition of $\rho$, this implies that $\norma{D(q^*)^{-1}(y)}_2 = 0$, and since $D$ is injective, we obtain $(q^*)^{-1}(y) = 0$, contradicting $\norma{(q^*)^{-1}(y)}_2=1$.
Therefore, $T$ does not attain its norm, and
$(X_1,\ell_2)$ fails the WMP$^+$.
 \qed

\medskip

\begin{remark} \rm 
The sublattice $X_1$ in the Main Theorem cannot be an ideal in $X$. Indeed, since $X$ is a reflexive Banach lattice, its norm is order continuous. Thus, every closed ideal in $X$ is a projection band (see \cite[Theorem 2.4.4]{meyer}). Its associated band projection $P$ satisfies $0 \leq P \leq {\rm Id}_X$, and hence $P$ is contractive. Since $(X, \ell_2)$ has the WMP, it also has the WMP$^+$. Therefore, if $X_1$ were a closed ideal in $X$, \cite[Proposition 3.3(b)]{luizmiranda} would imply that $(X_1, \ell_2)$ has the WMP$^+$, a contradiction.
\end{remark}

To address Question 1 over the complex field, we refer the reader to Section 2.2 of \cite{meyer} for the complexification of a Banach lattice.

\medskip

\noindent{\it Proof of Corollary \ref{casocomplexo}:} Let $X_{\mathbb C}$ and $(X_1)_{\mathbb C}$ be the canonical complexifications of the real Banach lattices $X$ and $X_1$. Then $X_{\mathbb C}$ is a reflexive complex Banach space and $(X_1)_{\mathbb C}$ is a closed subspace of $X_{\mathbb C}$. Every bounded complex-linear operator $S: X_{\mathbb C} \to \ell_2(\mathbb C)$ can be written as $Sx = \operatorname{Re} (S) (x) + i \operatorname{Im}(S) (x)$ for every $x \in X$,
where $\operatorname{Re}(S)$ and $\operatorname{Im}(S)$ are bounded real-linear operators from $X$ into $\ell_2(\R)$. By the proof of the Main Theorem, these operators are compact, which implies that $S$ is also compact. Since $X_{\mathbb C}$ is reflexive, $S$ attains its norm. Thus, $(X_{\mathbb{C}}, \ell_2(\mathbb C))$
has the WMP.

Let $T_{\mathbb C}:(X_1)_{\mathbb C}\to\ell_2(\mathbb C)$
be the complexification of the positive operator $T: X_1 \to \ell_2$ constructed in the proof of the Main Theorem. Since $T$ is positive, $|T_{\mathbb C} z| \leq T |z|$ for every $z \in (X_1)_{\mathbb C}$ (see \cite[Proposition 2.2.6]{meyer}), 
and consequently 
$$ \norma{T_{\mathbb C} z}_2 \leq \norma{T|z|}_2 \leq \norma{T} \norma{|z|}_{X_1} = \norma{T} \norma{z}_{(X_1)_{\mathbb C}} $$
for every $z \in (X_1)_{\mathbb C}$. Then 
$\norma{T_{\mathbb C}}\leq\norma{T}$.
The reverse inequality follows by restricting $T_{\mathbb C}$ to $X_1$. Hence $\norma{T_{\mathbb C}}=\norma{T}=1$.
Besides, since $T$ does not attain its norm, for every $0 \neq z \in (X_1)_{\mathbb C}$ we have
$$ \norma{T_{\mathbb C} z}_2 \leq \norma{T|z|}_2 < \norma{T} \norma{|z|}_{X_1} = \norma{T_{\mathbb C}} \norma{z}_{(X_1)_{\mathbb C}}, $$
which implies that $T_{\mathbb C}$ does not attain its norm. Finally, considering the sequence $x_n = (\frac{q^*(e_n)}{1 + n^{-1}}, 1)$ constructed in the proof of the Main Theorem, the canonical inclusion of $X_1$ into $(X_1)_{\mathbb C}$ yields that $x_n\in S_{(X_1)_{\mathbb C}}$ for every $n \in \N$, and
\[
 \norma{T_{\mathbb C}x_n}_2
 =\norma{Tx_n}_2
 =\frac{1}{1+n^{-1}}
 \longrightarrow1=\norma{T_{\mathbb C}}.
\]
Moreover, once again $(x_n)_n$ is not weakly null in $(X_1)_{\mathbb C}$. In fact, $x_n\cvf(0,1)$ also in $(X_1)_{\mathbb C}$: the real and imaginary parts of the restriction of every complex continuous linear functional to $X_1$ belong to $(X_1)^*$. Therefore, $((X_1)_{\mathbb C},\ell_2(\mathbb C))$ fails the WMP.\qed

\bigskip

\noindent{\bf Declaration on the use of generative AI}  The research questions and overall direction of this work were
determined by the authors. OpenAI's ChatGPT assisted in developing an initial counterexample. The authors subsequently extended the construction to the setting of Banach lattices, thereby resolving Question~2 and the complex case of Question~1, which were not addressed by the initial example.
ChatGPT was also used to assist with mathematical discussions,
reference searches, and revisions to the exposition. All arguments, references, and text were carefully checked,
corrected, and revised by the authors, who take full
responsibility for the content of the manuscript.

\medskip

\noindent{\bf Declarations
Conflict of interest} The authors declare they have no conflict of interest.

\medskip

\noindent{\bf Funding} The research of Vinícius  Miranda is supported by FAPESP (Grants 2025/08630-0 and 2023/12916-1) and FAPEMIG (Grant APQ-02622-26).

\bigskip

\noindent Alessandro  Costa\\
Departamento de Matemática\\
Universidade Federal do Maranhão\\
65085-580 -- São Luís -- MA -- Brazil\\
e-mail: alessandro.mendes@discente.ufma.br

\bigskip

\noindent Vinícius Miranda\\
Departamento de Matemática\\
Instituto de Ciências Matemáticas e de Computação\\
Universidade de São Paulo\\
13566-590 -- São Carlos -- SP -- Brazil\\
e-mail: viniciusmiranda@icmc.usp.br

\bigskip

\noindent Geivison Ribeiro\\
Departamento de Matemática\\
Universidade Federal de Sergipe\\
Itabaiana -- SE -- Brazil\\
e-mail: geivison@mat.ufs.br

\end{document}